\newtheorem{theorem}{Theorem}[section]
\newtheorem{lemma}[theorem]{Lemma}
\newtheorem{remark}[theorem]{Remark}
\newtheorem{proposition}[theorem]{Proposition}
\newcommand{\real}{{\mathbb{R}}}
\newcommand{\realpositive}{\mathbb{R}_{>0}}
\newcommand{\realnonnegative}{\mathbb{R}_{\ge 0}}
\newcommand{\GG}{{\mathcal{G}}}
\newcommand{\NN}{{\mathcal{N}}}
\newcommand{\LL}{{\mathcal{L}}}
\newcommand{\WW}{{\mathcal{W}}}
\newcommand{\QQ}{{\mathcal{Q}}}
\renewcommand{\SS}{{\mathcal{S}}}
\newcommand{\indicator}{\mathbf{1}}
\newcommand{\diag}[1]{\operatorname{diag}\left( #1\right)}
\newcommand{\Nin}{\NN^\text{in}}
\newcommand{\Nout}{\NN^\text{out}}
\newcommand{\timestep}{\Delta t}
\renewcommand{\epsilon}{\varepsilon}
\newcommand{\until}[1]{\{1,\dots, #1\}}
\newcommand{\setdef}[2]{\{#1 \, | \, #2\}}
\newcommand{\oprocendsymbol}{\hbox{$\bullet$}}
\newcommand{\oprocend}{\relax\ifmmode\else\unskip\hfill\fi\oprocendsymbol}
\newcommand{\longthmtitle}[1]{\mbox{}\textup{\textbf{(#1)}}}
\begin{document}

\title{A general class of spreading processes with non-Markovian dynamics}
  
\author{Cameron Nowzari, Masaki Ogura, Victor M. Preciado, and George J. Pappas \thanks{The authors are
    with the Department of Electrical and Systems Engineering,
    University of Pennsylvania, Philadelphia, PA 19104, USA, {\tt\small
      \{cnowzari,ogura,preciado,pappasg\}@seas.upenn.edu}}}
\maketitle

\begin{abstract}
In this paper we propose a general class of models for spreading processes we call
the $SI^*V^*$ model. Unlike many works that consider a fixed number of compartmental
states, we allow an arbitrary number of states on arbitrary graphs with heterogeneous
parameters for all nodes and edges. As a result, this generalizes an extremely large number of
models studied in the literature including the MSEIV, MSEIR, MSEIS, SEIV, SEIR, SEIS,
SIV, SIRS, SIR, and SIS models. Furthermore, we show how the $SI^*V^*$ model
allows us to model non-Poisson spreading processes letting us capture much more complicated
dynamics than existing works such as information spreading through social networks or
the delayed incubation period of a disease like Ebola. This is in contrast to the overwhelming
majority of works in the literature that only consider spreading processes that can be
captured by a Markov process. After developing the stochastic model, we analyze its
deterministic mean-field approximation and provide conditions for when the disease-free
equilibrium is stable. Simulations illustrate our results. 
\end{abstract}

\section{Introduction}

The study of spreading processes on complex networks has recently gained
a massive surge of interest. With the wide range of applications including
the spreading of a computer virus, how a product is adopted by
a marketplace, or how an idea or belief is propagated through a social
network, it is no surprise that a plethora of different models and studies
have been devoted to this. However, an overwhelming majority of the 
stochastic models proposed and studied assume that transitions from one
state to another (e.g., a healthy individual recovering from a disease) is a Poisson
process that follows an exponential distribution. Unfortunately, this
simplifying assumption restricts the applicability of such models to
exclude a number of specific processes like how information is disseminated through Twitter
or how the Ebola virus is spreading in West Africa. 

In this paper we propose a very general class of epidemic models and show how
it can be used to study spreading processes that don't necessarily evolve according
to an exponential distribution. In addition to being able to account for non-Poisson
spreading processes, our model generalizes almost every model studied in the literature
including the MSEIV, MSEIR, MSEIS, SEIV, SEIR, SEIS, SIV, SIRS, SIR, and SIS models. 
The development and analysis of such a general model also allows rapid prototyping of 
future spreading processes that might not even exist today.

\subsection*{Literature review}

One of the oldest and most commonly studied spreading models is the 
Susceptible-Infected-Susceptible (SIS) model~\cite{WOK-AGM:27}. Early
works such as the one above often consider simplistic assumptions
such as all individuals in a population being equally likely to interact
with everyone else in the population~\cite{NTB:75}.
One of the first works to consider a continuous-time SIS model over arbitrary contact graphs using
mean field theory is~\cite{AL-JAY:76}, which provides conditions on when the disease-free
state of the system is globally asymptotically stable.

In addition to the simple SIS model, a myriad of different models have also been
proposed and studied in the literature. In~\cite{SF-EG-CW-VAAJ:09,NF:07}, the
authors add various states to model how humans might adapt their behavior
when given knowledge about the possibility of an emerging epidemic.
The work~\cite{FDS-FNC-CMS:12} considers the possible
effect of human behavior changes for the three state Susceptible-Alert-Infected-Susceptible
(SAIS) model. In~\cite{CN-VMP-GJP:15-TCNS}, a four-state generalized Susceptible-Exposed-Infected-Vigilant
(G-SEIV) model is proposed and studied. This model is appealing because it was shown to generalize a large
number of other models already studied in the literature~\cite{BAP-DC-MF-NV-CD:10,HWH:00}.
These models have been used to study the propagation of computer 
viruses~\cite{MMW-JL:03,MG-WG-DT:03} or products and information~\cite{DE-JK:10},
and of course the spreading of diseases~\cite{MEJN:02}. However, a large drawback
is that \emph{all} of the works above consider an underlying Markov process that
drives the system. 

While this may be well suited for a number of spreading processes,
they have also been applied in areas for which this is not a very good approximation. 
A notable example is the spreading of the Ebola virus. 
The work~\cite{GC-NWH-CCC-PWF-JMH:04} looks at the spreading of Ebola
in Congo and Uganda in 2004 and estimates the spreading properties of the virus
fitted to a four-state SEIR model. Similarly, the work~\cite{AK-MN-MD-MI:15} looks
at the more recent outbreak of Ebola in West Africa and again estimates the parameters
of the virus fitted to a six-state model. The larger number of states allows the model
to better capture things like human behavioral changes and also the incubation period of
the Ebola virus. However, just like all the works mentioned above, all transitions are assumed to
evolve according to an exponential distribution. More specifically, once an individual
is exposed to the virus at some time $t_0$, the probability that the individual has
started showing symptoms by time $t$ is given by $P(t) = 1 - e^{-\epsilon (t-t_0)}$ for
some $\epsilon > 0$. However, this is far from a good approximation when looking at the
empirical data collected over the years. The work~\cite{ME-SFD-NF:11} studies a certain
strain (Zaire) of the Ebola virus and concludes that the incubation period of the disease
is much better modelled as a log-normal distribution with a mean of 12.7 days and standard
deviation of 4.31 days, which cannot be well captured by the exponential distribution
above. Another prominent example today is the spreading of information
through social networks on the internet, such as Twitter or Digg. It has been observed multiple
times that the spreading of information in these networks is again better modelled 
as a log-normal distribution rather than an exponential one~\cite{KL-RG-TS:10,PVM-NB-CD:11,CD-NB-PVM:13}.

We are only aware of very few works that have considered spreading processes without
exponential distributions. The work~\cite{PVM-RVDB:13} studies the drastic effects
that non-exponential distributions can have both on the speed of the spreading and
on the threshold conditions for when the disease will die out or persist. A
simple SI model is studied in~\cite{HJ-JIP-KK-JK:14} without the complexity of an
arbitrary graph structure. The SIS model with general infection and recovery
times is considered in~\cite{EC-RVDB-PVM:13}. In this work we generalize this
idea to a much wider class of epidemic models.

\subsection*{Statement of contributions}

The contributions of this paper are threefold.
First, we propose the $SI^*V^*$ model that generalizes a very large
number of models studied in the literature today. Our model allows an arbitrary number of
`infected' and `vigilant' states unlike many models that have a fixed number of states.
Multiple infected states allows us to capture various stages of a disease or spreading process
which may have very different properties in terms of contagiousness, chance of recovery, etc. 
Multiple vigilant states allows us to capture different reasons that an individual might not be
susceptible to a disease including behavioral changes, vaccinations, or even death. Second,
we develop and analyze the deterministic mean-field approximation for the model and 
provide conditions for which the disease-free states of our model 
are globally asymptotically stable. Finally, we show how allowing our Markov model to have
an arbitrary number of states can be used to approximate non-Poisson spreading processes
with arbitrary accuracy. This allows us to much more accurately describe real-life phenomena,
such as the propagation of information through social networks or spreading of the Ebola virus,
which have recently been shown to evolve according to log-normal distributions rather than
exponential ones.

\subsection*{Organization}

We begin in Section~\ref{se:prelim} by reviewing some preliminary concepts that will be
useful in the remainder of the paper. We develop our proposed $SI^*V^*$ model
in Section~\ref{se:model} and analyze its stability properties in Section~\ref{se:analysis}.
In Section~\ref{se:phase} we show how our general class of models can be used to model
many stochastic spreading processes with state transition times that do not obey an
exponential distribution. We demonstrate the efficacy of our model and validate our stability
results through simulations of the spreading of the Ebola virus in Section~\ref{se:simulations}.
Finally, we gather our concluding remarks and ideas for future work in Section~\ref{se:conclusions}.

\section{Preliminaries}\label{se:prelim}

We denote by $\real$ and $\realnonnegative$ the sets of real and nonnegative real numbers, 
respectively. We define the indicator
function $\indicator_Z$ to be $1$ if $Z$ is true, and $0$ otherwise. 

\paragraph*{Graph theory}
Given a directed graph~$\GG$ with $N$ nodes, we denote by $A \in \real^{N \times N}$ 
the associated adjacency matrix. 
The components of $A$ are given by $a_{ji} = 1$ if and only if there exists
a directed edge from $i$ to $j$ on the graph~$\GG$. We denote the in-neighbors
and out-neighbors
of node $i$ as $\Nin_i = \setdef{j \in \until{N}}{a_{ij} = 1}$
and $\Nout_i = \setdef{j \in \until{N}}{a_{ji} = 1}$, respectively.
%

Given a vector $q \in \real^n$, we let $\diag{q_1, \dots, q_n}$ denote
the $n \times n$ diagonal matrix with $q_1, \dots, q_n$ on the diagonal.
Given an arbitrary matrix $Q \in \real^{m \times n}$,
we define $\deg(Q) = \diag{\sum_{j=1}^n q_{1j}, \dots, \sum_{j=1}^n q_{mj} }$
the diagonal $m \times m$ matrix with row sums of $Q$ on the diagonal.
For a square matrix~$Q$, we define the Laplacian $\LL(Q) = \deg(Q) - Q$. 
A square matrix~$Q$ is \emph{Metzler} if its components $q_{ij} \geq 0$ for all
$i \neq j$. The following result will be useful in our analysis later. 

\begin{lemma}[Properties of Metzler matrices~\cite{AR:11,LF-SR:00}]\label{le:metzler}
Given a Metzler matrix $Q$, the following statements are equivalent:
\begin{enumerate}
\item $Q$ is Hurwitz 
\item There exists a positive vector $v$ such that $Qv < 0$
\item There exists a positive vector $w$ such that $w^T Q < 0$
\item $Q$ is nonsingular and $Q^{-1} \leq 0$
\end{enumerate}
\end{lemma}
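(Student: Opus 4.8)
The plan is to establish the four-way equivalence by proving a cycle of implications, exploiting the special structure that the Metzler property confers on $Q$. The cleanest route is to shift $Q$ into a nonnegative matrix and bring the Perron--Frobenius theory to bear. Specifically, since $Q$ is Metzler, I would choose a scalar $\alpha > 0$ large enough that $B = Q + \alpha I$ has all nonnegative entries; this is possible precisely because the off-diagonal entries of $Q$ are already nonnegative, so only the diagonal needs to be lifted. The key observation is that the eigenvalues of $Q$ are exactly the eigenvalues of $B$ shifted by $-\alpha$, so $Q$ is Hurwitz (statement (i)) if and only if the spectral radius $\rho(B)$ satisfies $\rho(B) < \alpha$, because the Perron--Frobenius eigenvalue of the nonnegative matrix $B$ is real and equals $\rho(B)$, and it is the rightmost eigenvalue of $Q$ after the shift.

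With this reduction in hand, I would prove the implications in the order (i)$\Rightarrow$(ii)$\Rightarrow$(iv)$\Rightarrow$(i), and separately handle (iii) by a transpose argument. For (i)$\Rightarrow$(ii): assuming $\rho(B) < \alpha$, the Perron--Frobenius theorem supplies a nonnegative (and, after a perturbation argument on the irreducible components, strictly positive) eigenvector $v$ with $Bv = \rho(B) v$, whence $Qv = (\rho(B) - \alpha) v < 0$ componentwise since $v > 0$ and $\rho(B) - \alpha < 0$. For (ii)$\Rightarrow$(iv): given a positive $v$ with $Qv < 0$, I would argue that $Q$ is nonsingular (a nontrivial kernel vector would contradict the existence of such $v$ via a sign argument on the extremal component), and then represent $Q^{-1} = -(\alpha I - B)^{-1} = -\alpha^{-1}\sum_{k \ge 0}(B/\alpha)^k$, a convergent Neumann series since $\rho(B/\alpha) < 1$; every term is nonnegative, so $-Q^{-1} \ge 0$, giving $Q^{-1} \le 0$. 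For (iv)$\Rightarrow$(i): if $Q^{-1} \le 0$ exists, then $\alpha I - B$ is invertible with nonnegative inverse, which by the standard characterization of M-matrices forces $\rho(B) < \alpha$, i.e.\ $Q$ is Hurwitz. The equivalence of (iii) follows by applying the already-proven equivalence (i)$\Leftrightarrow$(ii) to $Q^T$, noting that $Q^T$ is Metzler exactly when $Q$ is, and that $Q$ and $Q^T$ share the same eigenvalues so one is Hurwitz iff the other is.

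I expect the main obstacle to be the \emph{strict} positivity of the Perron eigenvector and the \emph{strict} inequalities $Qv < 0$ and $w^T Q < 0$, since Perron--Frobenius in its basic form only guarantees a strictly positive eigenvector when $B$ is irreducible. For a general (possibly reducible) nonnegative $B$ one only obtains a nonnegative eigenvector, which would yield $Qv \le 0$ rather than the strict inequality demanded by (ii). The standard remedy, which I would invoke, is a continuity/perturbation argument: replace $B$ by $B + \delta \mathbf{1}\mathbf{1}^T$ for small $\delta > 0$ to make it irreducible (indeed strictly positive), obtain a strictly positive eigenvector, and pass to the limit while monitoring that the strict spectral inequality $\rho(B) < \alpha$ is preserved for small $\delta$ by continuity of the spectral radius. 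Alternatively, one can cite the self-contained treatment of M-matrix characterizations in the references already given, where these equivalences are catalogued; given that Lemma~\ref{le:metzler} is stated with citations to~\cite{AR:11,LF-SR:00}, the intended proof is presumably a direct appeal to that literature rather than a from-scratch derivation, so the genuine work lies in matching the sign conventions (Metzler/Hurwitz on the stability side versus nonsingular M-matrix on the combinatorial side).
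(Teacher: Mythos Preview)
The paper does not prove Lemma~\ref{le:metzler}; it is stated with references to~\cite{AR:11,LF-SR:00} and left as a citation, exactly as you anticipated in your final paragraph. Your proposal therefore goes well beyond what the paper actually supplies.

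The route you outline---shifting $Q$ to a nonnegative matrix $B = Q + \alpha I$ and invoking Perron--Frobenius / M-matrix theory---is the standard textbook argument and is essentially correct. Two small refinements are worth noting. First, in your perturbation step for the reducible case you need not pass to a limit (which risks losing strictness): for any fixed small $\delta>0$ with $\rho(B+\delta\mathbf{1}\mathbf{1}^T)<\alpha$, the strictly positive Perron vector $v_\delta$ of $B+\delta\mathbf{1}\mathbf{1}^T$ already satisfies $Qv_\delta = (\rho(B+\delta\mathbf{1}\mathbf{1}^T)-\alpha)v_\delta - \delta(\mathbf{1}^T v_\delta)\mathbf{1} < 0$ componentwise, so no limiting argument is needed. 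Second, your step (ii)$\Rightarrow$(iv) invokes the Neumann series $\sum_{k\ge 0}(B/\alpha)^k$, whose convergence requires $\rho(B)<\alpha$, i.e.\ statement~(i); so you are implicitly routing through (i). A clean direct bridge (ii)$\Rightarrow$(i) is to conjugate by $\operatorname{diag}(v)$ so that the resulting Metzler matrix has strictly negative row sums, and then Gershgorin places every eigenvalue in the open left half-plane.
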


%

\section{Model description}\label{se:model}

We begin by formulating the $SI^*V^*$ that we study in the remainder of the paper. 
We follow the idea of the N-intertwined SIS model developed in~\cite{PVM-JO-RK:09} and
its extensions to the SAIS and generalized SEIV models developed in~\cite{FDS-FNC-CMS:12,CN-VMP-GJP:15-TCNS}. Although the latter models have been shown to generalize many different models
studied in the literature, they assume a fixed number of compartments for a given disease.
Instead, we build on a compartmental model studied in~\cite{MJK-PR:07} in which the
number of states relating to the disease are arbitrary. Unlike the population model (i.e., no
graph structure) studied
in~\cite{MJK-PR:07}, we are interested in proposing and analyzing this model
applied to arbitrary networks. In Section~\ref{se:phase} we show how this model can
be used to approximate a wide class of spreading processes on networks for \emph{any}
type of state transitions with arbitrary accuracy, rather than only Poisson processes (exponential
distributions).

We consider a virus spreading model with three classes of states called the $SI^*V^*$ model. 
The first class has only one state which is the susceptible state $S$. The susceptible state $S$ corresponds to a healthy individual who is capable of being exposed to the disease. The second class
has $m > 0$ states known as infectious states $I$. In the infectious class, an individual
can be in any of the $m$ states given by $I^k$ for $k \in \until{m}$. This allows the possibility
to model a number of variations to the infectious state including human behavior, severity of the disease, etc. The last class has $n > 0$ states known as vigilant states $V$. In
the vigilant class, an individual can be in any of the $n$ states given by $V^\ell$ for
$\ell \in \until{n}$. The vigilant class captures individuals who are not infected, but
also not immediately susceptible to contract the disease. The various states in the class
can be used to model different reasons that the individual is not susceptible such as
being vaccinated, having just recovered, or even deceased.

Consider a network with with $N$ nodes. For each node $i \in \until{N}$ we define the random variable $X_i(t) \in \{S, I^1, \dots, I^m, V^1, \dots, V^n\}$ as the state of node $i$ at a given time $t$. We consider a general directed contact graph~$\GG$ over which the disease can spread. A susceptible node
is only able to become exposed if it has at least one neighbor that is in any of the infectious states. 
A node $i$ can only be infected by nodes in $\Nin_i$ and can only infect nodes in $\Nout_i$. 

\renewcommand{\arraystretch}{1.3}
\begin{table}
\begin{center}
\begin{tabular}{|cl|}
\hline
$\delta^{k \ell}_i$ & Recovery rate from $I^k$ to $V^\ell$ \\
$\epsilon^{k k'}_i$ & Infection internal transition rate from $I^k$ to $I^{k'}$ \\
$\mu_i^{\ell \ell'}$ & Vigilant internal transition rate from $V^\ell$ to $V^{\ell'}$ \\
$\gamma^\ell_i$ & Rate of becoming susceptible from $V^\ell$ to $S$ \\
$\theta^\ell_i$ & Rate of becoming vigilant from $S$ to $V^\ell$ \\
$\beta_{ij}^k$ & Infection rate due to infected $(I^k)$ neighbor $j$ from $S$ to $I^1$ \\
\hline
\end{tabular}
\end{center}
\caption{Parameter definitions}\label{ta:definition}
\end{table}

The compartmental Markov process is defined by the following parameters. 
Let $\delta^{k \ell}_i$ be the recovery rate of node $i$ going from infectious
state $I^k$ to vigilant state $V^\ell$. This allows the possibility to model
different recovery rates depending on which state of an infection the individual
is in and which vigilant state the individual will end up in. We let
$D_i = [\delta^{k \ell}_i]_{k \ell} \in \real^{m \times n}$ be the matrix that describes these transitions. 
Let $\epsilon_i^{k k'}$ be the rate at which an individual in infectious state $I^k$
moves to infectious state $I^{k'}$. This
can model the various different stages or severity of a disease and how individuals
move from one stage to another. We let $E_i = [\epsilon_i^{k k'}]_{k k'} \in \real^{m \times m}$ be the matrix
that describes these transitions. We denote by $\mu_i^{\ell \ell'}$ the internal
transition rate from vigilant state $V^\ell$ to $V^{\ell'}$. This can model different
levels and types of vigilance in individuals, such as behavioral changes, changing
medications/vaccines, etc. We let $M_i = [\mu^{\ell \ell'}_i]_{\ell \ell'} \in \real^{n \times n}$ be the matrix
that describes these transitions.
We denote by $\gamma_i^\ell$ and $\theta_i^\ell$ the rates of moving from $V^\ell$
to $S$ and $S$ to $V^\ell$, respectively. Finally, let $\beta_{ij}^k$ be the
effect that a neighbor $j \in \Nin_i$ of node $i$ in state $I^k$ has on $i$.
The rate that an individual~$i$ moves from $S$ to $I^1$ is then given by
\begin{align*}
\sum_{k'=1}^m \sum_{j \in \Nin_i} \beta_{ij}^{k'} Y_j^{k'}, 
\end{align*}
where
\begin{align}\label{eq:indicator}
Y_j^k(t) &= \indicator_{X_j(t) = I^k}.
\end{align}
Note that when a susceptible individual~$i$ becomes infected, it always moves into the first
infectious state~$I^1$. It is then free to move to the other infectious stages
according to~$E_i$. 
All the disease parameters are nonnegative. Table~\ref{ta:definition} presents
the definitions of these parameters for convenience. 

The dynamics of the
epidemic spread is then modeled using the definition of the infinitesimal generator from~\cite{PVM:09}. For brevity, we only write out a small subset of the possible transitions,
\begin{align*}
P(X_i(t') \hspace*{-.4mm} = I^1 | X_i(t) \hspace*{-.4mm} = S, X(t)) \hspace*{-.4mm} &= \sum_{k'=1}^m \sum_{j \in \Nin_i} \beta_{ij}^{k'} Y_j^{k'} + o , \\
P(X_i(t') \hspace*{-.4mm} = I^2 | X_i(t) \hspace*{-.4mm} = I^1, X(t)) \hspace*{-.4mm} &= \epsilon_i^{12} + o , \\
P(X_i(t') \hspace*{-.4mm} = I^1 | X_i(t) \hspace*{-.4mm} = I^2, X(t)) \hspace*{-.4mm} &= \epsilon_i^{21} + o, \\
P(X_i(t') \hspace*{-.4mm} = V^\ell | X_i(t) \hspace*{-.4mm} = I^1, X(t)) \hspace*{-.4mm} &= \delta^{1\ell}_i + o , 
\end{align*}
where $t' = t + \timestep$ and $o = o(\timestep)$. 

Figure~\ref{fig:siv} shows the $(1+m+n)$-state $SI^*V^*$ compartmental Markov model 
for a single node~$i$. Note that the only graph-based transition is from
the susceptible state~$S$ to the first infected state~$I^1$. 
The state of the entire network $X(t)$ then lives in a $(1+m+n)^N$ dimensional space
making it very hard to analyze directly. Instead, we utilize a mean-field
approximation to reduce the complexity of the entire system. We do this by
replacing $Y_j^k$ in~\eqref{eq:indicator} by its expected value $E[Y_j]^k$.

\begin{figure}[tb]
\centering
\includegraphics[width=.95\linewidth]{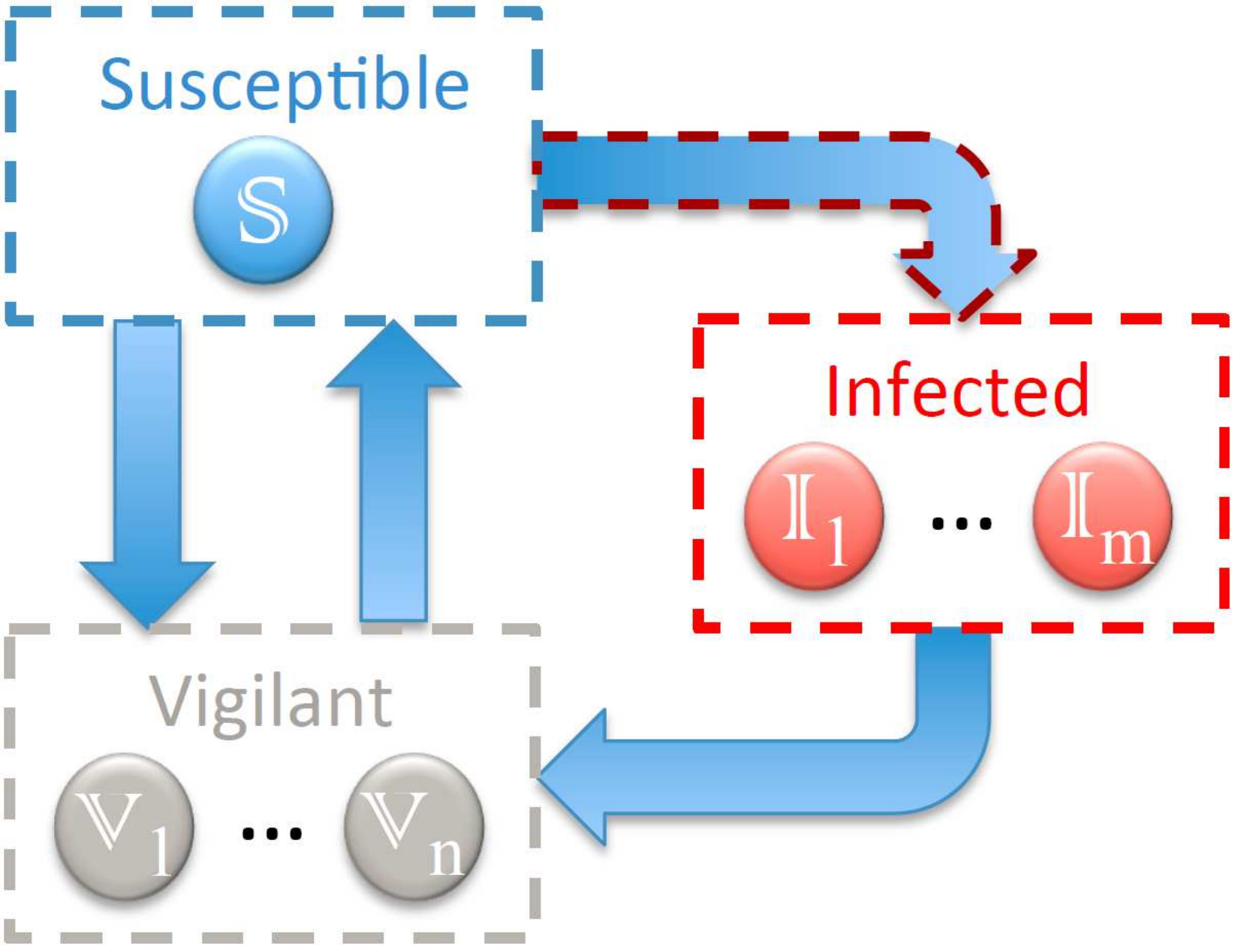}
\put(-120,160){$\sum_{k'=1}^m \sum_{j \in \Nin_i} \beta_{ij}^{k'} Y_j^k$}
\put(-100,45){$\delta_i^{k\ell}$} \put(-150,87){$\gamma_i^\ell$}
\put(-225,87){$\theta_i^\ell$}
\caption{$(1+m+n)$-state $SI^*V^*$ compartmental model for node~$i$.
Internal transition rates among the infected and
vigilant classes are not shown.}\label{fig:siv}
\end{figure}

We denote by $\left[ S_i(t), I_i^1(t), \dots, I_i^m(t), V_i^1(t), \dots, V_i^n(t) \right]^T$
the probability vector associated with
node $i$ being in each of these states, i.e.,
\begin{align}\label{eq:stateconstraint}
S_i(t) + \sum_{k'=1}^m I_i^{k'}(t) + \sum_{\ell' = 1}^n V_i^{\ell'}(t) = 1, \\
S_i(t), I_i^k(t), V_i^\ell(t) \geq 0, \notag
\end{align}
for all $i \in \until{N}$, $k \in \until{m}$, and $\ell \in \until{n}$.

The $S I^* V^*$ model we consider in this paper is then given by
\begin{align}\label{eq:continuousdynamics}
\dot{S}_i(t) &= \sum_{\ell'=1}^n \gamma_i^{\ell'} V_i^{\ell'} - \theta_i^{\ell'} S_i - S_i \sum_{k'=1}^m \sum_{j \in \Nin_i} \beta_{ij}^{k'} I_j^{k'} , \notag \\
\dot{I}_i^1(t) &= S_i \sum_{k'=1}^m \sum_{j \in \Nin_i} \beta_{ij}^{k'} I_j^{k'} - I_i^1 \sum_{\ell'=1}^n \delta^{1\ell'}_i \notag \\
& \hspace*{14ex} + \sum_{k'=1}^m I_i^{k'} \epsilon^{k'1}_i - I_i^1 \epsilon_i^{1k'}  , \\ 
\dot{I}_i^k(t) &=  - I_i^k \sum_{\ell'=1}^n \delta^{k\ell'}_i + \sum_{k'=1}^m I_i^{k'} \epsilon^{k'k}_i - I_i^{k} \epsilon_i^{kk'} , \notag \\
\dot{V}_i^\ell(t) &= \sum_{k'=1}^m \delta^{k'\ell} I_i^{k'} + \theta^\ell_i S_i - \gamma^\ell_i V_i^\ell + \sum_{\ell' = 1}^n V_i^{\ell'} \mu_i^{\ell' \ell} - V_i^\ell \mu_i^{\ell \ell'} , \notag
\end{align}
for $k \in \{ 2, \dots, m \}$ and $\ell \in \until{n}$. 

Due to constraints~\eqref{eq:stateconstraint}, one of the equations~\eqref{eq:continuousdynamics} is
redundant. By setting $S_i(t) = 1 - \sum_{k'=1}^m I_i^{k'}(t) - \sum_{\ell'=1}^n V_i^{\ell'}(t)$, we can describe the system by
\begin{align}\label{eq:removed}
\dot{I}_i^1(t) &= (1 - \sum_{k'=1}^m I_i^{k'} - \sum_{\ell'=1}^n V_i^{\ell'}) \sum_{k'=1}^m \sum_{j \in \Nin_i} \beta_{ij}^{k'} I_j^{k'} \notag \\
& \hspace*{3ex}  - I_i^1 \sum_{\ell'=1}^n \delta^{1\ell'}_i + \sum_{k'=1}^m I_i^{k'} \epsilon^{k'1}_i - I_i^1 \epsilon_i^{1k'} , \notag \\ 
\dot{I}_i^k(t) &= - I_i^k \sum_{\ell=1}^n \delta^{k\ell}_i + \sum_{k'=1}^m I_i^{k'} \epsilon^{k'k}_i - I_i^{k} \epsilon_i^{kk'} ,\\
\dot{V}_i^\ell(t) &= \sum_{k'=1}^m \delta^{k'\ell}_i I_i^{k'} + \theta^\ell_i (1 - \sum_{k'=1}^m I_i^{k'} - \sum_{\ell'=1}^n V_i^{\ell'}) \notag \\
& \hspace*{3ex} - \gamma^\ell_i V_i^\ell + \sum_{\ell' = 1}^n V_i^{\ell'} \mu_i^{\ell' \ell} - V_i^\ell \mu_i^{\ell \ell'} . \notag
\end{align}
Next, we are interested in studying the stability properties for this set of $N(m+n)$ ODEs. 

\section{Stability analysis of $SI^*V^*$}\label{se:analysis}

Let $\mathbf{x}_i = \left[ I_i^1, \dots, I_i^m \right]^T$, $\mathbf{y}_i = \left[ V_i^1, \dots, V_i^n \right]^T$, $\mathbf{x} = \left[ \mathbf{x}_1^T, \dots, \mathbf{x}_m^T \right]^T$,
and $\mathbf{y} = \left[ \mathbf{y}_1^T, \dots, \mathbf{y}_n^T \right]^T$. Naturally,
we are interested in conditions that will drive the system to a disease-free state,
i.e., $\mathbf{x} \rightarrow 0$. We begin by writing the dynamics of the system
as
\begin{align}\label{eq:matrixeqn}
\left[ \begin{array}{c} \mathbf{\dot{x}} \\ \mathbf{\dot{y}} \end{array} \right] = \WW \left[ \begin{array}{c} \mathbf{x} \\ \mathbf{y} \end{array} \right] + H,
\end{align}
where $\WW \in \real^{N(n+m) \times N(n+m)}$ captures the linear part of the dynamics
and $H = [H_x^T, H_y^T]^T$, with $H_x \in \real^{Nm}$ and $H_y \in \real^{Nn}$, captures 
the nonlinear part.
For convenience, we split $\WW$ into smaller matrices such that
\begin{align*}
\WW = \left[ \begin{array}{cc} \WW_{xx} & \WW_{xy} \\ \WW_{yx} & \WW_{yy} \end{array} \right] .
\end{align*}
First, we define the matrix $\WW_{xx} \in \real^{Nm \times Nm}$ that describes how infected states affect other infected states.
Let $\WW_{xx} = [\WW_{xx}^{ij}]$ be the block matrix where

\begin{align*}
\WW_{xx}^{ii} = -\LL(E_i) - \operatorname{deg}(D_i)
\end{align*} 
describes the internal transitions between the infected states
and the transitions from all infected states to all vigilant states, and
\begin{align*}
\WW^{ij}_{xx} = 
\left[ \begin{array}{c} 
\beta_{ij}^1, \dots, \beta_{ij}^m \\
\mathbf{0}_{(m-1) \times m}
\end{array} \right]
\end{align*}
describes the (linear) transitions of node~$i$ from the susceptible state to 
the first infected state $I^1$ due to other nodes. Second, it is easy to see from~\eqref{eq:removed}
that $\WW_{xy} = \mathbf{0}_{Nm \times Nn}$ because the infected states
are not (linearly) affected by the vigilant states. 

Third, we define the matrix $\WW_{yx}$ that describes how infected
states affect the vigilant states. Since all these transitions happen
internally (i.e., do no depend on the network structure), $\WW_{yx} = [\WW_{yx}^{ii}] \in \real^{Nn \times Nm}$
is a block diagonal matrix where 
\begin{align*}
\WW_{yx}^{ii} = D_i^T - \left[ \begin{array}{ccc} 
\theta_i^1 & \dots & \theta_i^1 \\
\theta_i^2 & \dots & \theta_i^2 \\
\vdots & & \vdots \\
\theta_i^n & \dots & \theta_i^n \end{array} \right]
\end{align*}
describes the transitions from all infected states to all vigilant states
and the transitions from the susceptible state to all vigilant states.

Finally, we define the matrix $\WW_{yy} \in \real^{Nn \times Nn}$ that describes how vigilant
states affect other vigilant states. As before, since all these transitions
happen internally, $\WW_{yy} = [\WW_{yy}^{ii}]$
is a block diagonal matrix where
\begin{align*}
\WW_{yy}^{ii} = -\LL(M_i) - \left[ \begin{array}{ccc} 
\theta_i^1 & \dots & \theta_i^1 \\
\theta_i^2 & \dots & \theta_i^2 \\
\vdots & & \vdots \\
\theta_i^n & \dots & \theta_i^n \end{array} \right] - \diag{\gamma_i^1, \dots, \gamma_i^m}
\end{align*}
describes the internal transitions between the vigilant states, transitions from the susceptible
state to all vigilant states, and transitions from
all vigilant states to the susceptible state.
 
We now define the column vector~$H$. 
As can be seen in equation~\eqref{eq:removed}, the nonlinearities only enter into the dynamics
of the first infectious state $I^1$. Thus, we can describe
$H_x^i \in \real^{m}$ as
\begin{align*}
 \left[ \begin{array}{c} 
\left( - \sum_{k'=1}^m I_i^{k'} - \sum_{\ell'=1}^n V_i^{\ell'} \right) \sum_{k'=1}^m \sum_{j \in \Nin_i} \beta_{ij}^{k'} I_j^{k'} \\
\mathbf{0}_{(m-1) \times 1} \end{array} \right] .
\end{align*} 
Due to removing the susceptible state, we also have a constant forcing given by
$H_y^i = \left[ \theta^1_i, \dots, \theta^n_i \right]^T \in \real^n$. 

\begin{theorem}\longthmtitle{Sufficient condition for global stability of disease-free equilibrium}\label{th:global-stability}
The disease-free states of~\eqref{eq:continuousdynamics} are globally
asymptotically stable if~$\WW_{xx}$ is Hurwitz.
\end{theorem}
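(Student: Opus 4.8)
The plan is to exploit the block-triangular structure of~\eqref{eq:matrixeqn}. Since $\WW_{xy} = \mathbf{0}$, the infected-state dynamics decouple from the vigilant states and read $\dot{\mathbf{x}} = \WW_{xx}\mathbf{x} + H_x$, so I would first reduce the whole problem to showing $\mathbf{x}(t) \to 0$. Two structural facts drive everything. First, $\WW_{xx}$ is Metzler: each diagonal block $\WW_{xx}^{ii} = -\LL(E_i) - \deg(D_i)$ has off-diagonal entries equal to those of $E_i \geq 0$ (because $-\LL(E_i) = E_i - \deg(E_i)$ and $\deg(D_i)$ is diagonal), while each off-diagonal block $\WW_{xx}^{ij}$ collects the nonnegative infection rates $\beta_{ij}^{k}$. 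Second, the nonlinearity is sign-definite: the only nonzero entry of $H_x^i$ equals $-\big(\sum_{k'} I_i^{k'} + \sum_{\ell'} V_i^{\ell'}\big)\sum_{k'}\sum_{j\in\Nin_i}\beta_{ij}^{k'} I_j^{k'}$, which is the product of the nonpositive factor $S_i - 1$ and a nonnegative infection term, hence $H_x \leq 0$ componentwise on the physically relevant region.

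To make these facts usable I would first record positive invariance of the constraint set~\eqref{eq:stateconstraint}: nonnegativity of each coordinate is preserved because every rate is nonnegative and the vector field is subtangential on each face, and $\sum_{k'} I_i^{k'} + \sum_{\ell'} V_i^{\ell'} \le 1$ is preserved through the redundant $S_i$ equation. On this set $\mathbf{x} \ge 0$, $\mathbf{y} \ge 0$, and $S_i \in [0,1]$, which is exactly what certifies $H_x \le 0$. Now I invoke Lemma~\ref{le:metzler}: since $\WW_{xx}$ is Metzler and, by hypothesis, Hurwitz, there is a positive vector $w$ with $w^T \WW_{xx} < 0$. I would then use the linear Lyapunov function $V(\mathbf{x}) = w^T \mathbf{x}$, which is positive definite and radially unbounded on the invariant region. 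Differentiating along~\eqref{eq:removed}, $\dot V = w^T \WW_{xx}\mathbf{x} + w^T H_x$; the first term is strictly negative for $\mathbf{x} \ne 0$ because $w^T\WW_{xx}$ has all-negative entries while $\mathbf{x} \ge 0$ is nonzero, and the second term is $\le 0$ since $w > 0$ and $H_x \le 0$. Hence $\dot V < 0$ away from $\mathbf{x} = 0$, giving global asymptotic stability of the infected subsystem. (Equivalently, since $\WW_{xx}$ is cooperative one could compare $\mathbf{x}$ with the solution of $\dot{\mathbf{z}} = \WW_{xx}\mathbf{z}$ and use $0 \le \mathbf{x}(t) \le \mathbf{z}(t) \to 0$.)

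Finally I would translate $\mathbf{x}(t)\to 0$ back into the statement about the disease-free states. Because the $\mathbf{y}$ dynamics are driven by $\mathbf{x}$ through $\WW_{yx}$ with the constant forcing $H_y$ and remain confined to the bounded invariant set, the set $\{\mathbf{x}=0\}$ is globally attractive, and stability of this set follows from the same Lyapunov function. The main obstacle I anticipate is not the $\mathbf{x}$-analysis, which is clean once the Metzler and sign structure are in place, but the invariance bookkeeping and the correct interpretation of the conclusion: with only $\WW_{xx}$ assumed Hurwitz, the vigilant block $\WW_{yy}$ need not be (take a node with $\theta_i^\ell = \gamma_i^\ell = 0$), so one cannot claim convergence to an isolated point. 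The statement must therefore be read as asymptotic stability of the disease-free manifold $\mathbf{x} = 0$ rather than of a single equilibrium; if in addition $\WW_{yy}$ is verified to be Hurwitz, the block-triangular spectrum then upgrades this to convergence to the unique disease-free equilibrium $\mathbf{y}^\ast = -\WW_{yy}^{-1}H_y$.
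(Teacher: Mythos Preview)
Your proposal is correct and follows essentially the same approach as the paper: both exploit that $\WW_{xx}$ is Metzler and Hurwitz to obtain a positive left vector $w$ with $w^T\WW_{xx}<0$ via Lemma~\ref{le:metzler}, use the linear Lyapunov function $w^T\mathbf{x}$, and bound $\dot V \le w^T\WW_{xx}\mathbf{x}$ using $H_x\le 0$. You in fact supply details the paper omits---the positive-invariance argument, the explicit verification that $\WW_{xx}$ is Metzler and that $H_x\le 0$ on the feasible set, and the careful reading of the conclusion as stability of the disease-free manifold $\{\mathbf{x}=0\}$ rather than of an isolated equilibrium when $\WW_{yy}$ is not assumed Hurwitz.
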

\begin{proof}
We begin by noticing that $\WW_{xx}$ is a Metzler matrix. Using Lemma~\ref{le:metzler}
and the condition that $\WW_{xx}$ is Hurwitz,
we know there exists a positive vector $v \in \realpositive^{Nm}$ such that $v^T \WW_{xx} < 0$.
Consider the Lyapunov function
\begin{align*}
J = v^T \mathbf{x},
\end{align*}
then
\begin{align*}
\dot{J} &= v^T \mathbf{\dot{x}} \\
&= v^T \WW_{xx} \mathbf{x} + v^T H_x \\
& \leq v^T \WW_{xx} \mathbf{x}
\end{align*}
is strictly negative for all $\mathbf{x} \neq 0$. It can then easily be
shown using LaSalle's Invariance Principle that $\mathbf{x} \rightarrow 0$~\cite{HKK:02}.
\end{proof}

Note that this conservative result essentially ignores the vigilant class 
and how it can help a disease die out. In other words, this result states
that if the vigilant class is removed and the disease-free state is still
globally asymptotically stable, adding the vigilant class cannot hurt. For this
reason, we do not see any forms of the parameters $\mu_i^{\ell \ell'}$,
$\gamma_i^\ell$, or $\theta_i^\ell$ appear in the condition.

In order to find the necessary and sufficient condition, we linearize
the entire system around the disease-free equilibrium. For simplicity,
we assume here that there exists no absorbing state inside the vigilant
class. Note that if there exist any absorbing states in the vigilant class,
it does not make sense to discuss asymptotic properties anyway, as all individuals
will eventually end up there, resulting in a disease-free equilibrium. 

We begin by computing the unique equilibrium for the vigilance class states. 
Letting $\mathbf{\dot{y}} = 0$ and $\mathbf{x} = 0$, we get
\begin{align*}
\mathbf{y}^* = -\WW_{yy}^{-1} H_y . 
\end{align*}
The inverse of~$\WW_{yy}$ is guaranteed to exist because it is a block diagonal
matrix made up of negative definite matrices. 
Let $\left[\bar{V}_i^1, \dots, \bar{V}_i^n \right]^T = \mathbf{y}_i^*$ for all $i \in \until{N}$.
The linearization of~\eqref{eq:matrixeqn} around the point $\mathbf{x} = 0, \mathbf{y} = \mathbf{y}^*$
is then given by
\begin{align*}
\left[ \begin{array}{c} \mathbf{\dot{x}} \\ \mathbf{\dot{y}} \end{array} \right] = \QQ \left[ \begin{array}{c} \mathbf{x} \\ \mathbf{y} \end{array} \right] ,
\end{align*}
where
\begin{align*}
\QQ_{xx}^{ii} = - \LL(E_i) - \deg(D_i),
\end{align*}
\begin{align*}
\QQ^{ij}_{xx} = 
\left( 1 - \sum_{\ell'=1}^n \bar{V}_i^{\ell'} \right) \left[ \begin{array}{c} 
 \beta_{ij}^1, \dots, \beta_{ij}^m \\
\mathbf{0}_{(m-1) \times m}
\end{array} \right] ,
\end{align*}
$\QQ_{xy} = \WW_{xy} = \mathbf{0}_{Nm \times Nn}$, $\QQ_{yx} = \WW_{yx},$
and $\QQ_{yy} = \WW_{yy}$. 

\begin{theorem}\longthmtitle{Local stability of disease-free equilibrium}\label{th:local-stability}
The disease-free states of~\eqref{eq:continuousdynamics} are locally asymptotically
stable if and only if $\QQ_{xx}$ is Hurwitz. 
\end{theorem}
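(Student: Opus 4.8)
The plan is to apply Lyapunov's indirect method to the reduced system~\eqref{eq:matrixeqn}, whose equilibrium $(\mathbf{x},\mathbf{y})=(0,\mathbf{y}^*)$ is exactly the disease-free state of~\eqref{eq:continuousdynamics} expressed on the invariant simplex~\eqref{eq:stateconstraint} after eliminating the redundant coordinate $S_i$. Local asymptotic stability of this equilibrium is then decided by the Jacobian $\QQ$: if $\QQ$ is Hurwitz the equilibrium is locally asymptotically stable, while if $\QQ$ has an eigenvalue with strictly positive real part the equilibrium is unstable~\cite{HKK:02}. The whole argument therefore reduces to characterizing when $\QQ$ is Hurwitz and showing this is equivalent to the stated condition on $\QQ_{xx}$ alone.

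First I would exploit the block structure. Since $\QQ_{xy}=\mathbf{0}_{Nm \times Nn}$, the matrix $\QQ$ is block lower-triangular, so its spectrum is the union of the spectra of $\QQ_{xx}$ and $\QQ_{yy}$; hence $\QQ$ is Hurwitz if and only if both $\QQ_{xx}$ and $\QQ_{yy}$ are Hurwitz. The key step is then to show that $\QQ_{yy}=\WW_{yy}$ is \emph{always} Hurwitz, which is what collapses the two-block condition to a single condition on $\QQ_{xx}$. I would argue this block by block: each $\WW_{yy}^{ii}$ is the generator of the susceptible--vigilant continuous-time Markov chain of node~$i$ (with infection switched off) after eliminating $S_i = 1 - \sum_{\ell'} V_i^{\ell'}$, so its eigenvalues are precisely the nonzero eigenvalues of the Metzler, zero-column-sum generator of that chain. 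The assumption that the vigilant class contains no absorbing state guarantees a single closed communicating class containing $S$, so by Perron--Frobenius theory the generator has a simple zero eigenvalue and all remaining eigenvalues in the open left half-plane; equivalently, the elimination removes the conserved mode and leaves $\WW_{yy}^{ii}$ Hurwitz. This is consistent with the earlier observation that $\WW_{yy}$ is invertible with negative-definite diagonal blocks.

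With $\WW_{yy}$ Hurwitz in hand, the two directions follow. For the ``if'' direction, $\QQ_{xx}$ Hurwitz together with $\WW_{yy}$ Hurwitz gives $\QQ$ Hurwitz and hence local asymptotic stability. For the ``only if'' direction, I would show that failure of $\QQ_{xx}$ to be Hurwitz forces instability. Here I would use that $\QQ_{xx}$ is Metzler---its off-diagonal blocks $\QQ_{xx}^{ij}$ have nonnegative entries because the equilibrium factor $1 - \sum_{\ell'} \bar{V}_i^{\ell'}$ is nonnegative on the simplex---so by Perron--Frobenius its eigenvalue of largest real part is real. If $\QQ_{xx}$ is not Hurwitz this dominant eigenvalue is nonnegative, and when it is strictly positive $\QQ$ inherits a right-half-plane eigenvalue, so the equilibrium is unstable.

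The step I expect to be the main obstacle is twofold. The substantive part is justifying that $\WW_{yy}$ is unconditionally Hurwitz, since this is precisely where the no-absorbing-state hypothesis must be used and where the Markov-chain/elimination bookkeeping has to be made rigorous. The more delicate technical point is the borderline case in the ``only if'' direction when the dominant (real) eigenvalue of $\QQ_{xx}$ is exactly zero: there the linearization is inconclusive, and a clean ``if and only if'' requires either excluding this threshold case or analyzing it separately, as is standard at the epidemic threshold.
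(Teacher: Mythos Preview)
Your approach is correct and rests on the same core idea as the paper: exploit the block lower-triangular structure of~$\QQ$ (since $\QQ_{xy}=\mathbf{0}$) so that the linearized $\mathbf{x}$-dynamics decouple from~$\mathbf{y}$. The paper's proof is far terser than yours: it simply writes $\dot{\mathbf{x}}=\QQ_{xx}\mathbf{x}$ and concludes that $\mathbf{x}\to 0$ in the linearization if and only if $\QQ_{xx}$ is Hurwitz, without explicitly invoking the spectral union or separately arguing that $\QQ_{yy}=\WW_{yy}$ is Hurwitz---that fact is asserted earlier in the text, where the paper claims the diagonal blocks of $\WW_{yy}$ are ``negative definite'' in order to justify $\mathbf{y}^*=-\WW_{yy}^{-1}H_y$. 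Your Markov-chain argument for this block is a legitimate way to make that assertion rigorous under the no-absorbing-state hypothesis. You also correctly flag the threshold case $\lambda_{\max}(\QQ_{xx})=0$, where Lyapunov's indirect method is inconclusive; the paper does not address this borderline, so its ``only if'' direction is, strictly speaking, left open at the threshold just as you anticipated.
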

\begin{proof}
Since the linear dynamics of the infectious states~$\mathbf{x}$ do not depend on the
vigilant states~$\mathbf{y}$, we can write
\begin{align*}
\mathbf{\dot{x}} = \QQ_{xx} \mathbf{x} .
\end{align*}
It is then clear that $\mathbf{x} \rightarrow 0$ in this linearization of the original system 
if and only if~$\QQ_{xx}$ is Hurwitz.
\end{proof}

\begin{remark}\longthmtitle{Global stability of disease-free equilibrium}
{\rm
We conjecture, and verify through simulation, that the necessary and sufficient
local stability result of Theorem~\ref{th:local-stability} is indeed a global
result but it has yet to be shown. We expect to have its proof completed in the
near future. \oprocend }
\end{remark}

We note here that based on these results, determining the stability properties of
the disease-free equilibrium amounts to checking if an $Nm \times Nm$ matrix is Hurwitz,
even though the original system is $N(m+n)$-dimensional. 

\section{Non-exponential distributions}\label{se:phase}

In this section we show how existing epidemic models can be studied
with non-Markovian state transitions by appropriately constructing an instance
of our $SI^*V^*$ model. To simplify the exposition, we demonstrate this idea for
a single example, the spreading of the Ebola virus, but note that the same idea 
can be used to expand a very large number of different models.

The underlying model we consider is the four-state G-SEIV model proposed in~\cite{CN-VMP-GJP:15-TCNS}
and shown in Figure~\ref{fig:seiv}. The `Susceptible' state $S$ captures individuals who
are able to be exposed to the disease, the `Exposed' state $E$ captures individuals who
have been exposed to the disease but have not yet developed symptoms, the `Infected' state $I$
captures individuals who are displaying symptoms and contagious, and the `Vigilant' state $V$
captures individuals who are not immediately susceptible to the disease (e.g., just recovered,
quarantining themselves, deceased).

%
%

\begin{figure}[tb]
\centering
\includegraphics[width=.95\linewidth]{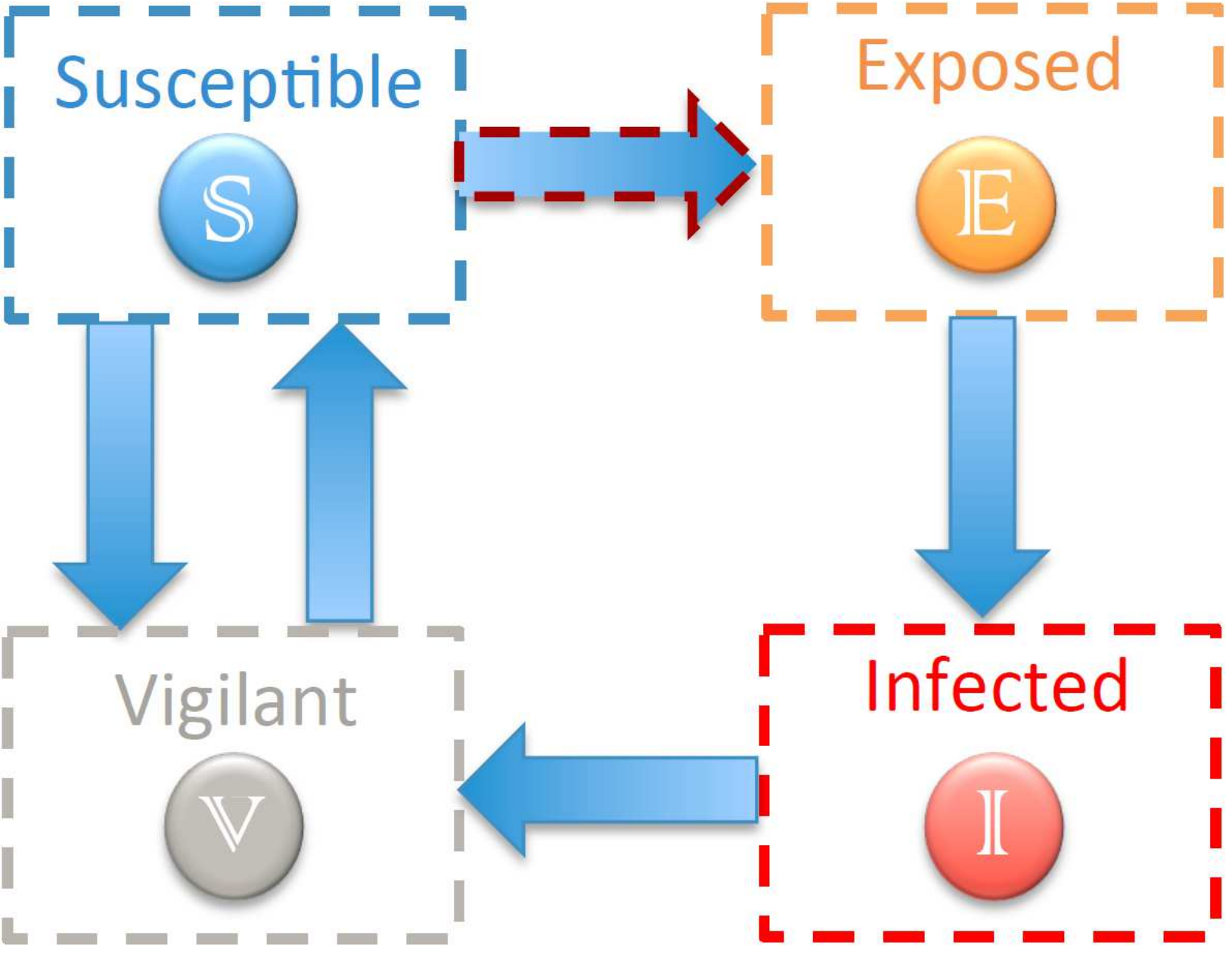}
\put(-143,170){\scalebox{.9}{$\sum_{j \in \Nin_i} \beta_{ij}^{I} Y_j^I$}}
\put(-110,45){$\delta_i$} \put(-160,87){$\gamma_i$}
\put(-230,87){$\theta_i$} 
\caption{Four-state G-SEIV compartmental model for node~$i$.}\label{fig:seiv}
\end{figure}

We assume that all transitions are Poisson processes \emph{except}
the transition from exposed to infected. For this transition we use a log-normal distribution with mean 12.7 days and standard deviation 4.31 days as proposed in~\cite{ME-SFD-NF:11} based on empirical data. 
In the construction of the approximation, an important role is played by the class of probability distributions called phase-type distributions~\cite{SA-ON-MO:96}.

Consider a time-homogeneous Markov process in continuous-time with $p+1$ ($p\geq 1$) states such that the states~$1$, $\dotsc$, $p$ are transient and the state $p+1$ is absorbing. The infinitesimal generator of the process is then necessarily of the form
\begin{equation}\label{eq:infgen}
\begin{bmatrix}
\SS & -\SS \mathbf{1}_{p \times 1}\\
\mathbf{0}_{1 \times p} & 0
\end{bmatrix},
\end{equation}
where $\SS \in \mathbb{R}^{p\times p}$ is an invertible Metzler matrix with non-positive row-sums. Also let
\begin{equation}\label{eq:initial}
\begin{bmatrix}
\phi \\ 0
\end{bmatrix}  \in \mathbb{R}^{p+1},\ \phi \in \realnonnegative^p
\end{equation}
be the initial distribution of the Markov process. Then, the time to absorption into the state~$p+1$ of the Markov process, denoted by $(\phi, \SS)$, is called a \emph{phase-type distribution}. It is known that the set of phase-type distributions is dense in the set of positive valued distributions~\cite{DRC:55}. Moreover, there are efficient fitting algorithms to approximate a given arbitrary distribution by a phase-type distribution~\cite{SA-ON-MO:96}.

We now show how this can be used to expand the G-SEIV model
to an instance of our $SI^*V^*$ model such that the time
it takes from to reach the infected state from the exposed state
follows a phase-type distribution.

\begin{proposition}\label{prop:ph}
Consider the $SI^*V^*$ model with $m = p+1$ infectious states, where $I^m$ corresponds
to the infected state and $I^k$ for $k \in \until{p}$ correspond to the exposed state.
Let $\SS_I \in \real^{m \times m}$ be given by
\begin{equation}
[\SS_I]_{kk'} = 
\begin{cases}
\epsilon^{kk'}&\text{$k\neq k'$} , 
\\
-\sum_{k'=1}^m \epsilon^{kk'} &\text{otherwise} .
\end{cases} 
\end{equation}
Then the length of time that it takes a node $i$ to go from state $I^1$ to $I^m$
follows the distribution $(e_1, S_I)$,
where $e_1 = \left[1, \mathbf{0}_{1 \times p} \right]^T$. 
\end{proposition}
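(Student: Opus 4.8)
The plan is to recognize the internal progression of a single node through its infectious states as a continuous-time Markov chain, and then to read off the phase-type structure directly from the definition preceding~\eqref{eq:initial}. First I would fix a node~$i$ and discard every transition except the internal infectious ones governed by the rates $\epsilon^{kk'}$. Inspecting~\eqref{eq:removed}, the internal dynamics of the $I_i^k$ for a single node do not couple to the network or to the other nodes (the graph enters only through the newly-infected term in $\dot{I}_i^1$), so the state visited within $\{I^1,\dots,I^m\}$ evolves as a standalone CTMC started deterministically in $I^1$. By construction its generator has off-diagonal entries $\epsilon^{kk'}$ and diagonal entries $-\sum_{k'}\epsilon^{kk'}$, which is exactly $\SS_I$.

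Next I would identify the phase-type data. The exposed sub-states $I^1,\dots,I^p$ play the role of the transient states $1,\dots,p$, and the infected state $I^m$ plays the role of the absorbing state $p+1$. Under the modelling assumption that an infected node does not revert to an exposed sub-state, i.e.\ $\epsilon^{mk'}=0$ for $k'\neq m$, the $m$-th row of $\SS_I$ vanishes, so $\SS_I$ has precisely the block form of~\eqref{eq:infgen} with leading $p\times p$ principal block $\SS$ and absorption column $-\SS\mathbf{1}_{p\times 1}$, whose $k$-th entry is the rate $\epsilon^{km}$ of jumping from $I^k$ into $I^m$. I would then verify the three requirements on $\SS$: it is Metzler because its off-diagonal entries are the nonnegative rates $\epsilon^{kk'}$; its row sums equal $-\epsilon^{km}\le 0$, hence are non-positive; and it is invertible because $I^m$ is reachable from every exposed sub-state, so the chain is absorbed with probability one and $\SS$ is Hurwitz, hence nonsingular.

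With these facts in hand the conclusion is immediate. The node begins in $I^1$, so the initial distribution is $e_1=[1,\mathbf{0}_{1\times p}]^T$ as in~\eqref{eq:initial}, and the time to reach $I^m$ is exactly the time to absorption of this CTMC. By the definition of a phase-type distribution this time is distributed as $(e_1,\SS)$, which under the identification of $\SS$ with the transient block of $\SS_I$ is the claimed $(e_1,\SS_I)$.

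The step I expect to be the main obstacle is not any computation but the careful identification of the phase-type sub-generator inside the $m\times m$ matrix $\SS_I$: one must confirm that $\SS_I$ really has the absorbing block structure of~\eqref{eq:infgen} and that its transient block simultaneously satisfies all three properties demanded there. Establishing invertibility is the one place that is not purely formal, since it amounts to the reachability hypothesis that $I^m$ can be reached from each exposed sub-state through the $\epsilon$-transitions; without some such connectivity assumption on the $\epsilon^{kk'}$ the sub-generator could fail to be Hurwitz, so I would state this assumption explicitly.
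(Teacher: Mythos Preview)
Your proposal is correct and follows essentially the same approach as the paper: identify the node's progression through $\{I^1,\dots,I^m\}$ as a CTMC started at $I^1$ with absorbing state $I^m$, match its generator to the form~\eqref{eq:infgen}, and conclude from the definition of a phase-type distribution. The paper's own argument is actually terser than yours---it stops the process at the first hitting time of $I^m$ (so that $I^m$ is absorbing by construction) rather than assuming $\epsilon^{mk'}=0$, and it does not explicitly verify the Metzler, row-sum, or invertibility conditions or the reachability hypothesis you rightly flag.
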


\begin{proof}
Without loss of generality, assume that a node $i$ becomes exposed at time $t = 0$,
i.e., $X_i(0) = I^1$. Letting $T$ be the time that node $i$ first reaches the infected
state, i.e., $X_i(T) = I^{m}$, we need to show that $T$ follows $(e_1, \SS_I)$. 

We begin by noticing that the initial condition $X_i(0) = e_i$ corresponds to
node~$i$ having just entered the exposed state. This is because in our $SI^*V^*$ model,
a susceptible node can only enter the infected states through $I^1$. Also, from the definition of the model, it is clear that the Markov process $X_i$ has the infinitesimal generator~\eqref{eq:infgen} on the time interval $[0, T]$. Also $I^{m}$ is clearly the only absorbing state of the process (when confined on $[0, T]$). The above observation shows that $T$ follows the phase-type distribution $(e_1, \SS_I)$ because we have satisfied the
definitions of the infinitesimal generator~\eqref{eq:infgen} and initial distribution~\eqref{eq:initial}.
\end{proof}

Proposition~\ref{prop:ph} shows that it is possible to model the transition from the exposed state to
the infected state of the SEIV model as a phase-type distribution. This is done by essentially
expanding the exposed state in the original SEIV model from a single state to~$p$. 
As noted earlier, this is only one
specific example that can be extended to model many different state transitions as phase-type distributions
rather than exponential ones. Next, we show how an arbitrary distribution can be approximated as
a phase-type distribution and how to choose the parameters for our $SI^*V^*$ model to realize
the desired distribution.


Continuing with our Ebola example, we show how phase-type distributions can well approximate the log-normal distribution of Ebola's incubation time with mean of $12.7$ days and a standard-deviation of $4.31$ days. 
To do this, we utilize the expectation-maximization algorithm proposed in~\cite{SA-ON-MO:96} 
Figure~\ref{fig:p246810} shows the results for different amounts of internal states~$p$. 
We can see here that the more internal states~$p$ we use, the closer our phase-type distribution
becomes to the desired log-normal distribution. An instance of the phase-type distribution
for~$p=10$ is shown in Figure~\ref{fig:phase}.
\begin{figure}[tb]
\centering
\includegraphics[width=.95\linewidth]{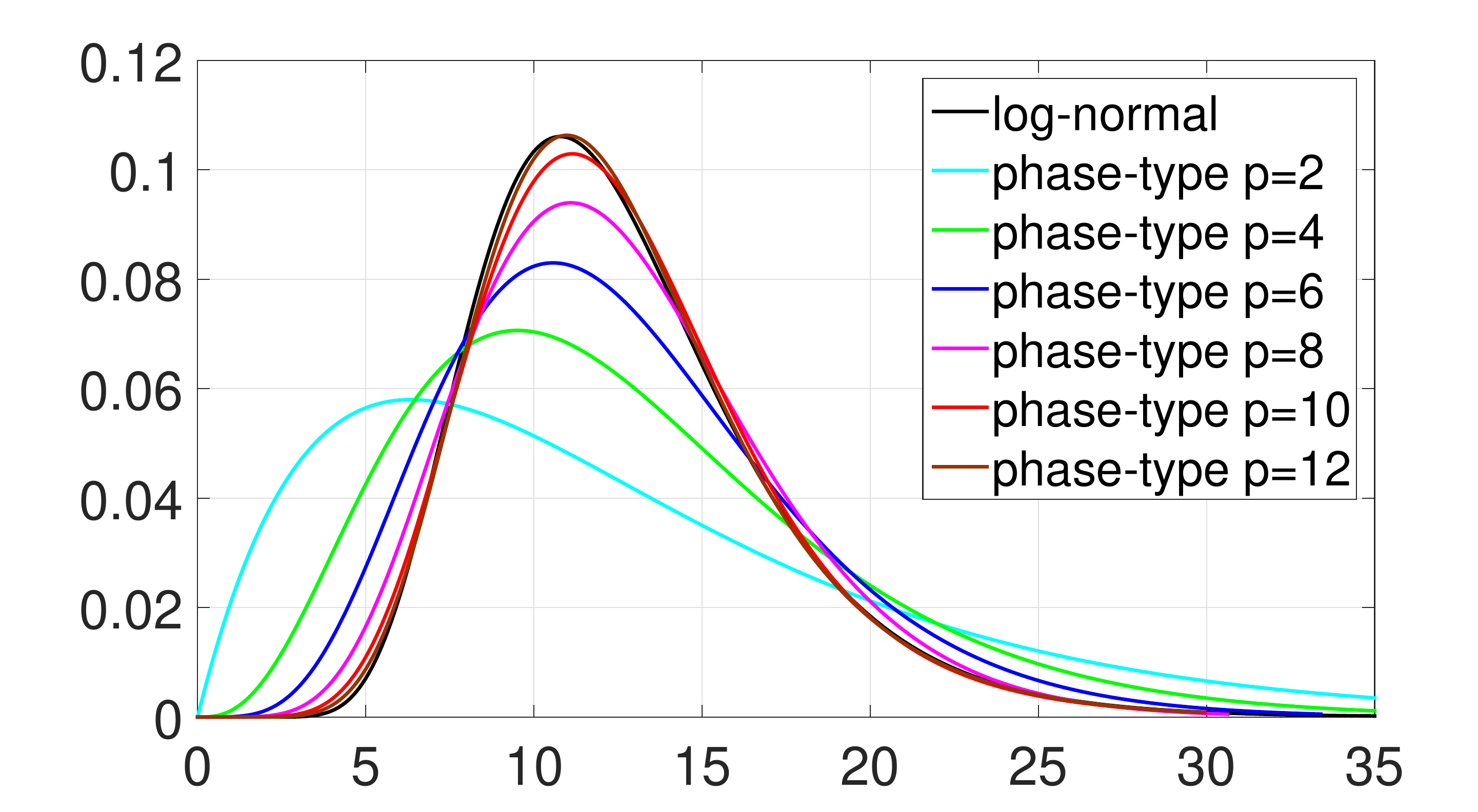}
\caption{Approximation of log-normal distribution (mean 12.7 days and standard deviation 4.31 days) with phase-type distributions for Ebola's incubation period~\cite{ME-SFD-NF:11}.}
\label{fig:p246810}
\end{figure}

\begin{remark}\label{re:dense}
{\rm
In fact one can show that the set of the phase-type distributions of the form $(e_1, \SS)$ is dense in the set of all the phase-type distributions as follows. Let $(\phi, \SS)$ be a given phase-type distribution. For a positive real number $r$ define the block-matrix
\begin{equation}\label{eq:T_n}
T_r = \begin{bmatrix}
-r & r\phi^\top
\\
0 & \SS
\end{bmatrix}. 
\end{equation}
Then we can prove that the sequence of the phase type distributions $\{(e_1, T_r)\}_{n=1}^\infty$ 
with $p + 2$ states
converges to the phase type distribution $(\phi, \SS)$ with $p+1$ states. Here we provide only an outline of the proof. Let $X$ be the time-homogeneous Markov process having the infinitesimal generator~\eqref{eq:T_n} and the initial distribution $e_1$. Define $t_1 = \sup\{\tau : X(\tau) = 1\}$, i.e.,
the time at which the Markov process leaves the first state, and let $t_2$ be the time the process $X$ is absorbed into the state $p+2$. By the definition of the first row of the generator $T_r$, we see that $X(t_1)$ follows the distribution $\left[ 0 , \phi^T, 0 \right]^T \in \realnonnegative^{p+2}$.

Therefore $t_2 -t_1$ follows $(\phi, \SS)$. Moreover, as $r$ increases, the probability density function of $t_1$ converges to the Dirac delta on the point $0$. Therefore the random variable $t_2 = t_1 + (t_2 -t_1)$, 
which follows $(e_1, T_r)$ by definition, converges to $(\phi, \SS)$. The details are omitted due to
space restrictions. \oprocend }
\end{remark}

The implications of Remark~\ref{re:dense} are that although our $SI^*V^*$ model only
allows a susceptible node to enter the infected class through $I^1$, we are still able to
model \emph{any} phase-type distribution $(\phi, \SS)$ rather than just $(e_1, \SS)$. 

\section{Simulations}\label{se:simulations}

Here we demonstrate how the results of Section~\ref{se:phase} can be used to model
a spreading process with a non-exponential state transition and validate the stability
results of Section~\ref{se:analysis} by simulating the spreading of Ebola.
The underlying model we use is a four-state G-SEIV model proposed in~\cite{CN-VMP-GJP:15-TCNS}
and shown in Figure~\ref{fig:seiv}. The `Susceptible' state $S$ captures individuals who
are able to be exposed to the disease, the `Exposed' state $E$ captures individuals who
have been exposed to the disease but have not yet developed symptoms, the `Infected' state $I$
captures individuals who are displaying symptoms and contagious, and the `Vigilant' state $V$
captures individuals who are not immediately susceptible to the disease (e.g., just recovered,
quarantining themselves).

We assume that all transitions are Poisson processes \emph{except}
the transition from $E$ to $I$. For this transition we use a log-normal distribution with mean 12.7 days and standard deviation 4.31 days as proposed in~\cite{ME-SFD-NF:11} based on empirical data. Using the EM algorithm
proposed in~\cite{SA-ON-MO:96} with $p=10$ phases, we expand the exposed state from a single state to
10 states. This means we can describe our four state non-Poisson SEIV model by a 13-state Poisson $SI^*V^*$
model with one susceptible state, one vigilant state, and $m = 11$ infectious states where $I^{11}$
is the only contagious state. The other infectious states $I^k$ for $k \in \until{10}$ correspond
to the exposed (but not symptomatic) state of the original SEIV model. 
The obtained internal compartmental model between the 
exposed state and infected state is shown in Figure~\ref{fig:phase}. 

{
\psfrag{E}[cc][cc]{$I^1$}
\psfrag{P2}[cc][cc]{$I^2$}
\psfrag{P3}[cc][cc]{$I^3$}
\psfrag{P4}[cc][cc]{$I^4$}
\psfrag{P5}[cc][cc]{$I^5$}
\psfrag{P6}[cc][cc]{$I^6$}
\psfrag{P7}[cc][cc]{$I^7$}
\psfrag{P8}[cc][cc]{$I^8$}
\psfrag{P9}[cc][cc]{$I^9$}
\psfrag{P10}[cc][cc]{$I^{10}$}
\psfrag{I}[cc][cc]{$I^{11}$}
\begin{figure*}[htb!]
\centering
\includegraphics[width=14cm]{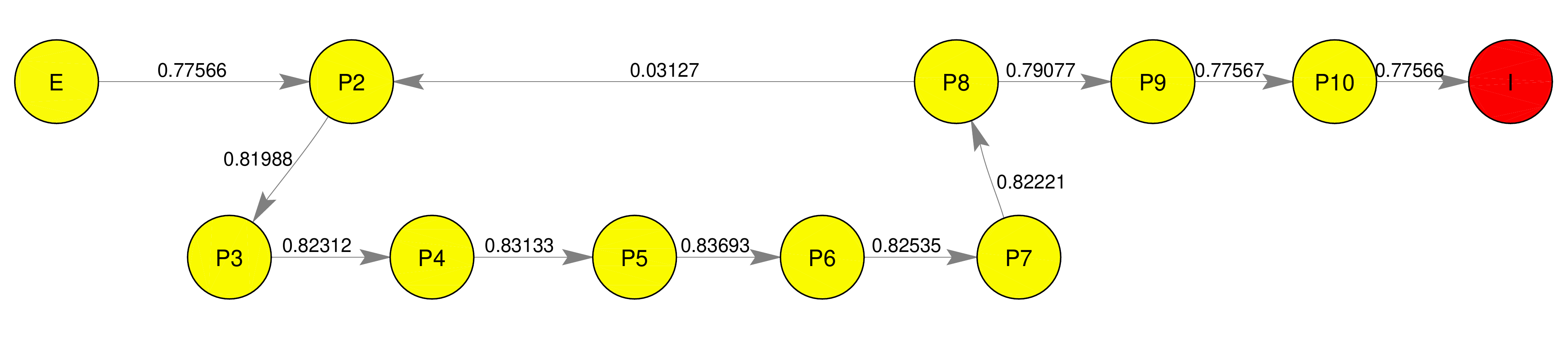}
\caption{Approximation of Ebola incubation time distribution with phase-type ($p=10$) distribution.}
\label{fig:phase}
\end{figure*}
}

For our simulations we consider a strongly connected Erdos-Renyi graph~$A$ with $N = 20$ nodes and
connection probability $0.15$. Initially, the vaccination rates $\theta_i$
are randomly chosen from a uniform distribution $\theta_i \in [0.3, 0.8]$ and the
rates of becoming susceptible $\gamma_i \in [0.2, 0.7]$. Since it is known that Ebola can only be
transmitted by people who are showing symptoms, we set $\beta_{ij}^k = 0$ for all $k \in \until{10}$.
For links that exist in the graph~$A$ we randomly set the infection rate $\beta_{ij}^{11} \in [0.1, 0.4]$.
Similarly, we assume that only infected individuals can recover, thus we set $\delta_i^k = 0$ for
all $k \in \until{10}$ and randomly set the recovery rate $\delta_i \in [0.1, 0.4]$.
Since we only have one vigilant state, there are no internal transition parameters $\mu$.

To demonstrate the effectiveness of the expansion of our model to capture the log-normal incubation
times of Ebola, we randomly set the initial conditions of being exposed to $I^1_i(0) \in [0.25, 0.75]$
and the susceptible state to $S_i(0) = 1 - I^1_i(0)$. Thus, we assume that there are initially no nodes
in the vigilant~$V$ or infected states~$I^k$ for $k \in \{2, \dots, 11 \}$. For the parameters used, 
we obtain $\lambda_\text{max}(\QQ_{xx}) = -0.1264$
as the largest real part of the eigenvalues of $\QQ_{xx}$. 
Figure~\ref{fig:trajectories}(a) shows the evolution of the maximum, minimum, and average probabilities
of being in any of the 11 infected states over time $P_i(t) = \sum_{k=1}^{11} I_i^k(t)$. Figure~\ref{fig:trajectories}(b) shows the probabilities
of being in only the truly infected (and symptomatic) state $I_i^{11}(t)$ for all nodes $i$. Here we can see
the effectiveness of our expanded model in capturing the log-normal incubation times of Ebola, seeing the
peak of infections at 12.7 days. Given enough time, we see that all infections eventually die out
as the stability condition of Theorem~\ref{th:local-stability} is satisfied. 

{
  \psfrag{onetwothreefour1}[cc][cc]{\small Maximum}%
  \psfrag{onetwothreefour2}[cc][cc]{\small Average}%
  \psfrag{onetwothreefour3}[cc][cc]{\small Minimum}%
\begin{figure}[htb]
  \centering
  \subfigure[]{\includegraphics[width=.75\linewidth]{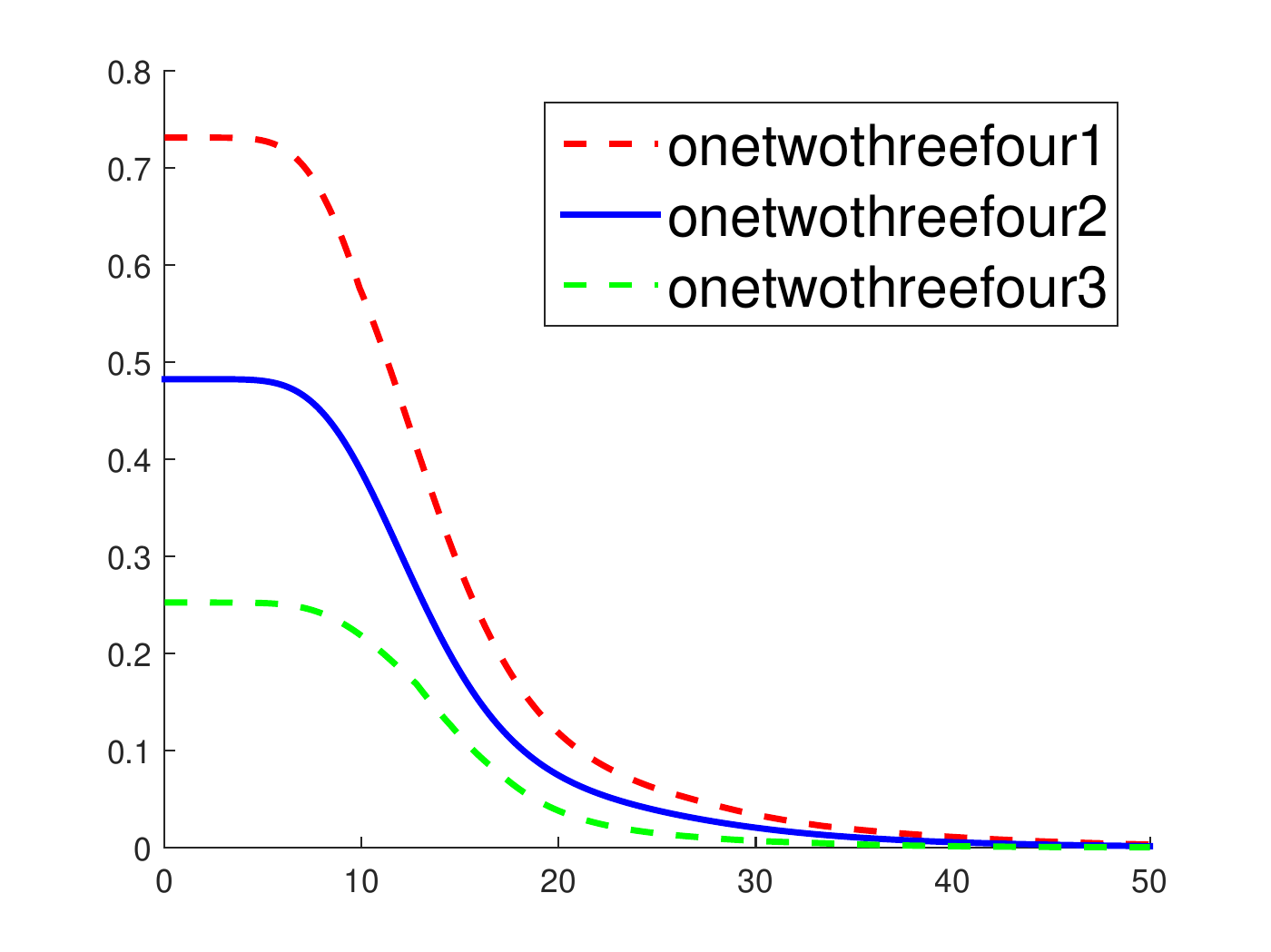}}
  \put(-100,0){\small Days} \put(-190,100){\small $P(t)$} 
  
  \subfigure[]{\includegraphics[width=.75\linewidth]{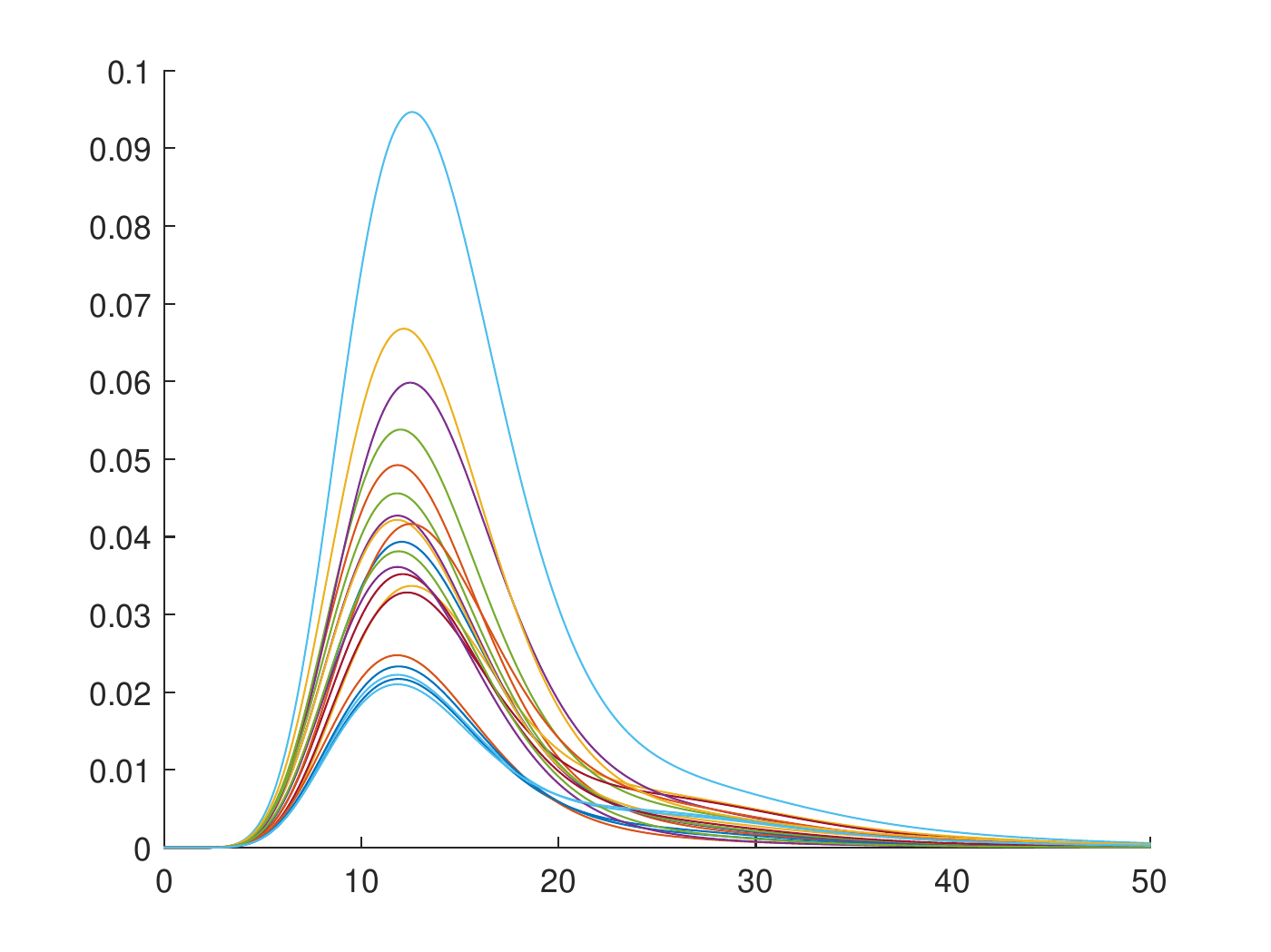}}
  \put(-100,0){\small Days} \put(-195,100){\small $I^{11}(t)$}
  \caption{Plots of (a) the minimum, maximum, and average trajectories of the probability of each node
  being in any infected state and (b) the trajectories of the probability of each node being in the
  infectious state $I^{11}$. }\label{fig:trajectories}
\end{figure}
}

In Figure~\ref{fig:steadystate} we vary the recovery rates $\delta_i$ and infection rates $\beta_{ij}^{11}$
and look at the steady-state probabilities $P(\infty)$ of each node being in any infectious state
where we approximate $P(\infty) \approx P(T)$ for large~$T$. We can see
here the sharp threshold property that occurs as $\lambda_\text{max}(\QQ_{xx})$ moves from negative to positive,
validating our stability results of Section~\ref{se:analysis}.

{
  \psfrag{onetwothreefour1}[cc][cc]{\small Maximum}%
  \psfrag{onetwothreefour2}[cc][cc]{\small Average}%
  \psfrag{onetwothreefour3}[cc][cc]{\small Minimum}%
\begin{figure}[htb]
  \centering
  {\includegraphics[width=.75\linewidth]{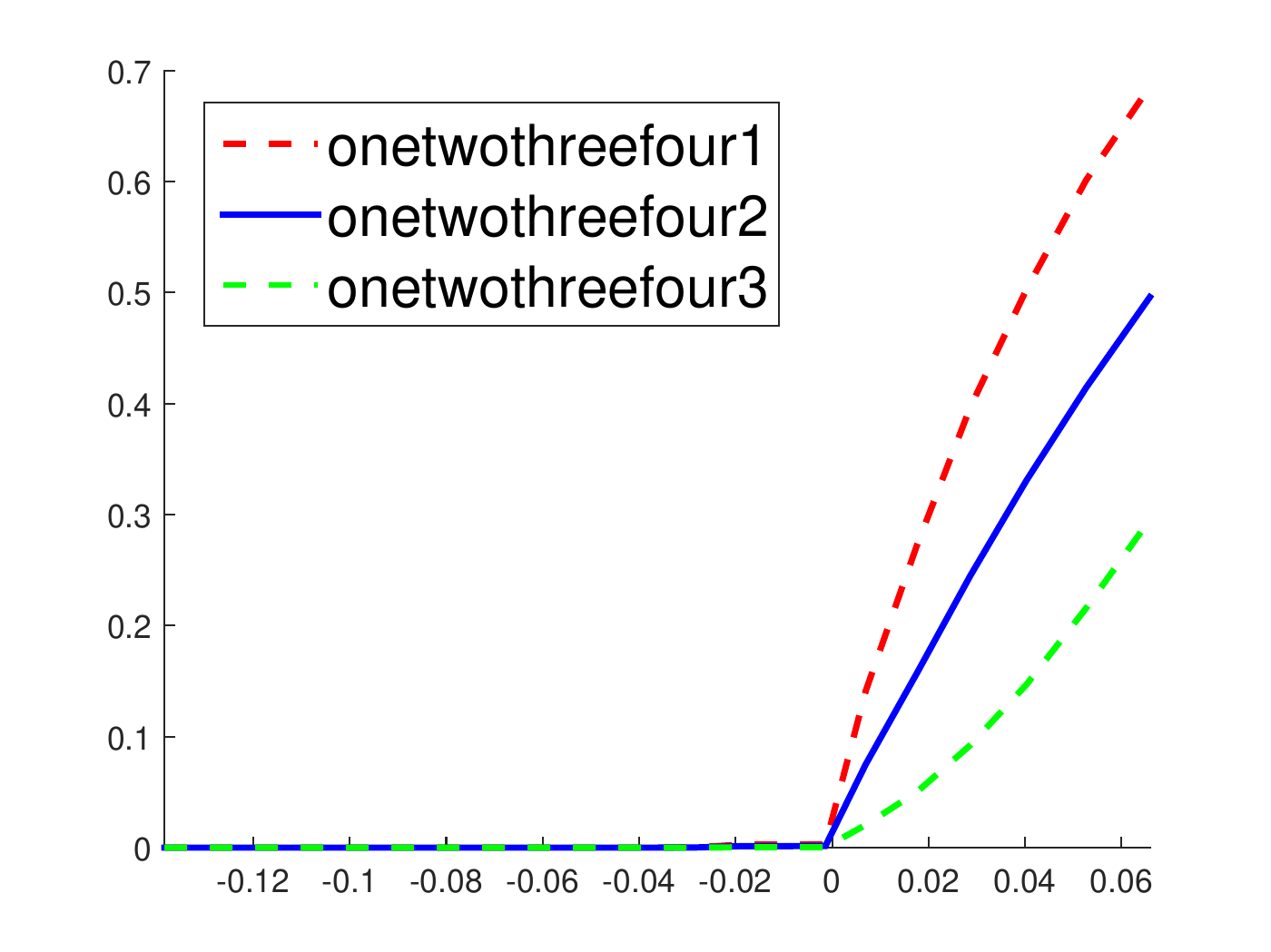}}
  \put(-100,0){\small $\lambda_\text{max}(\QQ_{xx})$} \put(-190,100){\small $P(\infty)$} 
  \caption{Plot of the minimum, maximum, and average steady-state probabilities of each node
  being in any infected state. }\label{fig:steadystate}
\end{figure}
}

\section{Conclusions}\label{se:conclusions}

In this work we have proposed a general class of stochastic epidemic models called $SI^*V^*$ on arbitrary graphs, with heterogeneous node and edge parameters, and an arbitrary number of states. We have then provided conditions
for when the disease-free equilibrium of its mean-field approximation is stable. Furthermore, we have shown
how this general class of models can be used to handle state transitions that don't follow an exponential
distribution, unlike the overwhelming majority of works in the literature. We demonstrate this modeling capability
by simulating the spreading of the Ebola virus, which is known to have a non-exponentially distributed
incubation time (i.e., time it takes to show symptoms once an individual is exposed). For future work we
are interested in studying how to control this model which can be used in a much wider range of applications
than before due to its capabilities in modeling non-Poisson spreading processes. 

\section*{Acknowledgements}
This work was supported in part by the TerraSwarm Research Center, one of six centers supported by the STARnet phase of the Focus Center Research Program (FCRP), a Semiconductor Research Corporation program sponsored by MARCO and DARPA.


\end{document}